\newtheorem{theorem}{Theorem}[section]
\theoremstyle{plain}
\newtheorem{lemma}{Lemma}[section]
\newtheorem{problem}{Problem}
\numberwithin{equation}{section}
\begin{document}
\title[ A method to compute the strength using bounds]{ A method to compute the strength using bounds}
\author{ Rikio Ichishima}
\address{Department of Sport and Physical Education, Faculty of Physical
Education, Kokushikan University, 7-3-1 Nagayama, Tama-shi, Tokyo 206-8515,
Japan}
\email{ichishim@kokushikan.ac.jp}
\author{Francesc A. Muntaner-Batle}
\address{Graph Theory and Applications Research Group, School of Electrical
Engineering and Computer Science, Faculty of Engineering and Built
Environment, The University of Newcastle, NSW 2308 Australia }
\email{famb1es@yahoo.es}
\author{Yukio Takahashi}
\address{Department of Science and Engineering, Faculty of Electronics and
Informatics, Kokushikan University, 4-28-1 Setagaya, Setagaya-ku, Tokyo
154-8515, Japan}
\email{takayu@kokushikan.ac.jp}
\subjclass{05C78, 90C27}
\keywords{strength, graph labeling, combinatorial optimization}

\begin{abstract}
A numbering $f$ of a graph $G$ of order $n$ is a labeling that assigns distinct elements of the set $\{1,2, \ldots, n \}$ to the vertices of $G$. 
The strength $\mathrm{str}\left(G\right) $ of $G$ is defined by 
$\mathrm{str}\left( G\right) =\min \left\{ \mathrm{str}_{f}\left( G\right)\left\vert f\text{ is a numbering of }G\right. \right\}$,
where $\mathrm{str}_{f}\left( G\right) =\max \left\{ f\left( u\right)
+f\left( v\right) \left\vert uv\in E\left( G\right) \right. \right\} $.
A few lower and upper bounds for the strength are known and, although it is in general hard to compute the exact value for the strength, a reasonable approach to this problem is to study for which graphs a lower bound and an upper bound for the strength coincide. 
In this paper, we study general conditions for graphs that allow us to determine which graphs have the property that lower and upper bounds for the strength coincide and other graphs for which this approach is useless.

\end{abstract}

\date{Nov 23, 2023}
\maketitle

\section{Introduction}

Only graphs without loops or multiple edges will be considered in this
paper. Undefined graph theoretical notation and terminology can be found in \cite{CL} unless otherwise specified. 
The \emph{vertex set} of a graph $G$ is denoted by $V \left(G\right)$, 
while the \emph{edge set} of $G$ is denoted by $E\left (G\right)$.

For the sake of notational convenience, we will use the notation $\left[ a,b\right] $ for the interval of integers $%
x $ such that $a\leq x\leq b$.
A \emph{numbering} $f$ of a graph $G$ of
order $n$ is a labeling that assigns distinct elements of the set $\left[ 1,n%
\right] $ to the vertices of $G$. The \emph{strength} $\mathrm{str}\left(
G\right) $ of $G$ is defined by 
\begin{equation*}
\mathrm{str}\left( G\right) =\min \left\{ \mathrm{str}_{f}\left( G\right)
\left\vert f\text{ is a numbering of }G\right. \right\} \text{,}
\end{equation*}%
where $\mathrm{str}_{f}\left( G\right) =\max \left\{ f\left( u\right)
+f\left( v\right) \left\vert uv\in E\left( G\right) \right. \right\} $. 
Since empty graphs $nK_{1}$ do not have edges, this definition does not apply
to such graphs. 
Consequently, we may define $\mathrm{str}\left( nK_{1}\right) =+\infty $ for every positive integer $n$. 
Therefore, for every nonempty graph $G$ of order $n$, 
it follows that $3 \leq \mathrm{str}\left( G\right) \leq 2n-1$. 
This type of numberings was introduced in \cite{IMO1} as a generalization of the
problem of finding whether a graph is super edge-magic or not (see \cite%
{ELNR} for the definition of a super edge-magic graph, and also consult either 
\cite{AH} or \cite{FIM} for alternative and often more useful definitions of the same
concept). 
A necessary and sufficient condition for a graph to be super edge-magic established by Figueroa-Centeno et al. \cite{FIM} 
gives rise to the concept of the consecutive strength labeling of a graph (see \cite{IMO1} for the definition of a consecutive strength labeling of a graph), 
which is equivalent to the concept of super edge-magic labeling.
For further knowledge on the strength of graphs and related concepts, the authors suggest that
the reader may consult the results in \cite{ GLS, IMO2, IMO3, IMO4, IMOT, IMT, IMT2, IOT1, IOT2, IOT3}.

The \emph{degree of a vertex} $v$ in a graph $G$ is the number of edges of $G$ incident with $v$. The \emph{minimum degree} of $G$ is the minimum degree among the vertices of $G$ and is denoted by $\delta\left(G\right)$.

The next result found in \cite{IMO1} provides a lower bound for the strength of a graph in terms of its order and minimum degree.

\begin{lemma}
\label{lower_bound} For every graph $G$ of order $n$ with $\delta\left(G\right) \geq 1$,
\begin{equation*}
\mathrm{str}\left(G\right) \geq n+\delta\left(G\right) \text{.}
\end{equation*}
\end{lemma}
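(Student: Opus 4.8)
The plan is to show that the lower bound holds for \emph{every} numbering $f$, since $\mathrm{str}\left(G\right)$ is the minimum of $\mathrm{str}_{f}\left(G\right)$ over all numberings. So fix an arbitrary numbering $f$ of $G$, and the goal reduces to establishing $\mathrm{str}_{f}\left(G\right) \geq n + \delta\left(G\right)$.

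The key move is to focus on the vertex carrying the largest label. First I would let $v$ be the unique vertex with $f\left(v\right) = n$. Since $\delta\left(G\right) \geq 1$, the vertex $v$ has at least $\delta\left(G\right)$ neighbors, so its neighborhood $N\left(v\right)$ satisfies $\left\vert N\left(v\right)\right\vert \geq \delta\left(G\right)$. The labels assigned by $f$ to these neighbors form a set of at least $\delta\left(G\right)$ distinct integers, all lying in the interval $\left[1, n-1\right]$ (they are distinct from $n = f\left(v\right)$).

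The next step is the only piece requiring a small observation: among any $\delta\left(G\right)$ distinct positive integers, the largest is at least $\delta\left(G\right)$, because the smallest possible such collection is $\left[1, \delta\left(G\right)\right]$, whose maximum equals $\delta\left(G\right)$. Hence there exists a neighbor $u$ of $v$ with $f\left(u\right) \geq \delta\left(G\right)$. The edge $uv \in E\left(G\right)$ then contributes
\begin{equation*}
f\left(u\right) + f\left(v\right) \geq \delta\left(G\right) + n \text{,}
\end{equation*}
so by the definition of $\mathrm{str}_{f}\left(G\right)$ as a maximum over edges, we obtain $\mathrm{str}_{f}\left(G\right) \geq n + \delta\left(G\right)$.

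Finally, since $f$ was an arbitrary numbering, taking the minimum over all numberings yields $\mathrm{str}\left(G\right) \geq n + \delta\left(G\right)$, as desired. I do not anticipate a genuine obstacle here; the argument is elementary once one commits to examining the vertex labeled $n$ together with its neighborhood. The only point demanding any care is the counting observation that $\delta\left(G\right)$ distinct labels in $\left[1, n-1\right]$ force a neighbor label of size at least $\delta\left(G\right)$, and that the hypothesis $\delta\left(G\right) \geq 1$ is exactly what guarantees $v$ has a neighbor at all, so that an edge incident with $v$ actually exists.
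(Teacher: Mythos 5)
Your proof is correct. Note that the paper itself gives no proof of this lemma; it is quoted from the earlier paper \cite{IMO1}, so there is nothing to compare against here, but your argument --- examining the vertex $v$ with $f\left(v\right)=n$, observing that its at least $\delta\left(G\right)$ neighbors carry distinct labels so that some neighbor $u$ has $f\left(u\right)\geq \delta\left(G\right)$, and then passing from the bound on $\mathrm{str}_{f}\left(G\right)$ for arbitrary $f$ to the minimum --- is exactly the standard argument for this bound, and every step (including your remark that $\delta\left(G\right)\geq 1$ is what guarantees an edge incident with $v$ exists) is sound.
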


The lower bound given in Lemma \ref{lower_bound} is sharp in the sense that there are infinitely many graphs $%
G$ for which $\mathrm{str}\left( G\right) =\left| V\left( G\right) \right|+\delta \left( G\right) $ (see \cite{GLS, IMO1, IMO2, IMOT, IOT2, IOT3, IMT, IMT2} for a detailed list of such graphs and other sharp bounds). 
In fact, it was shown in \cite{IMO4} that for every $k\in \left[ 1,n-1\right] $, 
there exists a graph $G$ of order $n$ satisfying $\delta \left( G\right) =k$ and 
$\mathrm{str}\left( G\right) =n+k$.

A set $S$ of vertices in a graph $G$ is \emph{independent} if no two vertices in $S$ are adjacent. 
The maximum number of vertices in an independent set of vertices of $G$ is called the \emph{independence number} of $G$ and is denoted by $\beta\left(G\right)$. 
From the definition, it is immediate that if $G$ and $H$ are two graphs of the same order such that $G \subseteq H$, then $\beta\left(H\right) \leq \beta\left(G\right)$.

The next result provides lower and upper bounds for the strength of a graph in terms of its order and independence number. 
The lower bound is due to Gao et al. \cite{GLS} and the upper bound was found in \cite{IMT2}.

\begin{lemma}
\label{upper_bound} For every graph $G$ of order $n$,
\begin{equation*}
2n-2\beta\left(G\right)+1 \leq \mathrm{str}\left(G\right) \leq 2n-\beta\left(G\right) \text{.}
\end{equation*}
\end{lemma}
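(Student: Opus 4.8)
The plan is to establish the two inequalities separately, since they call for opposite kinds of argument: the upper bound is proved by exhibiting one good numbering, whereas the lower bound must be verified for every numbering. Throughout I assume $G$ is nonempty (equivalently $\beta(G) \leq n-1$), so that all labels involved are genuine positive integers and the upper bound is not vacuous.

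For the upper bound $\mathrm{str}(G) \leq 2n - \beta(G)$, I would fix a maximum independent set $S$ with $|S| = \beta(G)$ and construct a numbering $f$ that places the $\beta(G)$ largest labels on $S$; that is, assign the labels $\{n - \beta(G) + 1, \ldots, n\}$ bijectively to the vertices of $S$ and the labels $\{1, \ldots, n - \beta(G)\}$ to the vertices of $V(G) \setminus S$. The key observation is that, since $S$ is independent, no edge joins two vertices of $S$, so every edge $uv$ has at least one endpoint in $V(G) \setminus S$, whose label is at most $n - \beta(G)$, while the other endpoint has label at most $n$. Hence $f(u) + f(v) \leq n + (n - \beta(G)) = 2n - \beta(G)$ for every edge, which gives $\mathrm{str}_{f}(G) \leq 2n - \beta(G)$ and therefore $\mathrm{str}(G) \leq 2n - \beta(G)$.

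For the lower bound $\mathrm{str}(G) \geq 2n - 2\beta(G) + 1$, I would take an arbitrary numbering $f$ and look at the set $T$ of the $\beta(G) + 1$ vertices receiving the largest labels, namely those with labels in the interval $[n - \beta(G), n]$. Since $|T| = \beta(G) + 1$ exceeds the independence number, $T$ cannot be independent, so some pair $u, v \in T$ is adjacent. Both of their labels lie in $[n - \beta(G), n]$ and are distinct, so the smallest value their sum can attain is $(n - \beta(G)) + (n - \beta(G) + 1) = 2n - 2\beta(G) + 1$; thus $\mathrm{str}_{f}(G) \geq f(u) + f(v) \geq 2n - 2\beta(G) + 1$. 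As $f$ was arbitrary, $\mathrm{str}(G) \geq 2n - 2\beta(G) + 1$.

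Both arguments are short, so the real work is to get the two counting steps exactly right rather than to overcome a deep obstacle. In the upper bound one must confirm that every edge genuinely meets the complement of $S$, which is precisely the defining property of independence. In the lower bound the subtle point that produces the crucial $+1$ is that the two adjacent vertices necessarily carry \emph{distinct} integer labels, which forces their sum strictly above $2(n - \beta(G))$; overlooking distinctness would only yield the weaker estimate $2n - 2\beta(G)$. This is the step I expect to require the most care.
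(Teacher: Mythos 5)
Your proposal is correct, but there is nothing in the paper to compare it against: the paper states this lemma as a quoted result from the literature (the lower bound is attributed to Gao, Lau, and Shiu \cite{GLS}, the upper bound to \cite{IMT2}) and gives no proof of either inequality. Your argument supplies a complete, self-contained proof, and both halves are the natural ones: placing the $\beta(G)$ largest labels on a maximum independent set so that every edge meets a vertex labeled at most $n-\beta(G)$, and, for the lower bound, observing that the $\beta(G)+1$ vertices labeled in $\left[ n-\beta(G),n\right]$ cannot form an independent set, so two of them are adjacent and their (distinct) labels sum to at least $2n-2\beta(G)+1$. You were also right to flag the nonemptiness hypothesis explicitly: as stated, the lemma quietly excludes $G=nK_{1}$, since the paper's convention $\mathrm{str}(nK_{1})=+\infty$ would violate the upper bound, and your condition $\beta(G)\leq n-1$ is exactly what makes the interval $\left[ n-\beta(G),n\right]$ consist of genuine labels. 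The one point worth keeping in mind is that your distinctness observation, which produces the $+1$ in the lower bound, is indeed the crux; without it one only gets $2n-2\beta(G)$, which is strictly weaker and would not match the complete-graph case $\mathrm{str}(K_{n})=2n-1$, $\beta(K_{n})=1$, where both of your bounds are tight simultaneously.
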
 

Since there is only one numbering $f$ of the complete graph $K_{n}$ ($n \geq 2$) and the label of the edge joining the vertices labeled $n$ and $n-1$ is $2n-1$, it follows that $\mathrm{str}_{f}\left( K_{n}\right)= \mathrm{str}\left(K_{n}\right)= 2n-1$. 
It is also clear that $\beta\left(K_{n}\right)=1$ for all positive integers $n$. 
Therefore, the lower and upper bounds given in Lemma \ref{upper_bound} coincide when $G=K_{n}$ for integers $n \geq 2$ and this is the only case when these two bounds coincide.

There are other related parameters that have been studied in the area of
graph labelings. Excellent sources for more information on this topic are
found in the extensive survey by Gallian \cite{Gallian}, which also includes
information on other kinds of graph labeling problems as well as their
applications.

\section{Computing the strength of graphs}
This section is devoted to study for which graphs the lower and upper bounds provided in Lemmas \ref{lower_bound} and \ref{upper_bound} coincide.
The next result shows that the lower bound given in Lemma \ref{lower_bound} coincides with the upper bound given in Lemma \ref{upper_bound} for complete $k$-partite graphs.

\begin{lemma}
\label{result_1}
Let $G$ be a complete $k$-partite graph of order $n$ for some positive integer $k$. 
Then 
\begin{equation*}
n+\delta\left(G\right)=2n-\beta\left(G\right) \text{.}
\end{equation*}
\end{lemma}
\begin{proof}
Let the partite sets of $G$ be $V_{1},V_{2}, \dots, V_{k}$ for some positive integer $k$. 
Then 
\begin{equation*}
\beta\left(G\right)=\max \left\{\left\vert V_{i}\right\vert\left\vert i \in \left[1,k\right] \right. \right\}
\end{equation*}
and $n=\sum_{i=1}^{k} \left\vert V_{i}\right\vert$.
Therefore, 
\begin{equation*}
\delta\left(G\right)=\sum_{n=1}^{k} \left\vert V_{i}\right\vert-\max \left\{\left\vert V_{i}\right\vert\left\vert i \in \left[1,k\right] \right. \right\}
=n-\beta\left(G\right) \text{,}
\end{equation*} 
implying that 
\begin{equation*}
n+\delta\left(G\right)=n+\left(n-\beta\left(G\right)\right)=2n-\beta\left(G\right) \text{.}
\end{equation*}
\end{proof}

The next result concerns with the partition of the vertex set of a connected bipartite graph.

\begin{lemma}
\label{result_2}
Let $G$ be a connected bipartite graph with partite sets $U$ and $W$. Then the partition $U \cup W$ is unique up to renaming of vertices.
\end{lemma}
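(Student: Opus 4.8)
The plan is to show that any two bipartitions of $G$ agree as unordered pairs of sets, so that the only freedom is interchanging the names $U$ and $W$. Suppose $\left\{ U,W\right\}$ and $\left\{ U^{\prime },W^{\prime }\right\}$ are both partitions of $V\left( G\right)$ into independent sets with every edge joining the two parts. First I would fix an arbitrary base vertex $v_{0}$, and, after renaming if necessary, assume $v_{0}\in U$ and $v_{0}\in U^{\prime }$; this is exactly the harmless relabeling permitted by the statement.

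The core of the argument is a parity computation along paths, made available by connectedness. For an arbitrary vertex $x$, connectedness of $G$ supplies a path $v_{0}=x_{0},x_{1},\ldots ,x_{m}=x$. Since consecutive vertices of this path are adjacent, they must lie in opposite parts of any bipartition. Hence, reading the path against $\left\{ U,W\right\}$, the vertices alternate between $U$ and $W$ starting from $x_{0}=v_{0}\in U$, so that $x=x_{m}\in U$ precisely when $m$ is even and $x\in W$ precisely when $m$ is odd. Applying the identical reasoning to $\left\{ U^{\prime },W^{\prime }\right\}$, and using $v_{0}\in U^{\prime }$, gives $x\in U^{\prime }$ precisely when this same $m$ is even. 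Note that I never need the parity of $m$ to be independent of the chosen path: I only compare the two bipartitions against one fixed path per vertex.

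Comparing the two conclusions yields $x\in U\iff x\in U^{\prime }$ for every vertex $x$, whence $U=U^{\prime }$ and therefore $W=W^{\prime }$, which establishes uniqueness up to renaming. The only point requiring care is the initial normalization: without the freedom to swap the labels of the second bipartition, the two parts could be interchanged, and without connectedness the parity of $m$ would not be forced to agree across different components, allowing each component to flip its two parts independently. I therefore expect the main (and essentially only) obstacle to be articulating cleanly that connectedness is what anchors every vertex to the single base vertex $v_{0}$, thereby propagating the alternation consistently throughout the whole graph.
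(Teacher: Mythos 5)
Your proof is correct. Note that the paper itself offers no proof of this lemma at all: it is stated as a known fact (it is the classical uniqueness of the bipartition of a connected bipartite graph), so there is no argument in the paper to compare yours against. Your parity-along-a-path argument is the standard way to establish this fact, and it is carried out soundly: fixing one base vertex $v_{0}$, normalizing both bipartitions so that $v_{0}$ lies in the first part of each, and then reading a single fixed path from $v_{0}$ to an arbitrary vertex $x$ against both bipartitions simultaneously. Your remark that you never need the parity of the path length to be independent of the choice of path is exactly the right observation -- it is what keeps the argument elementary and avoids any appeal to the characterization of bipartite graphs by the absence of odd cycles. Connectedness is used precisely where you say it is, and the renaming step is precisely the ``up to renaming'' allowance in the statement, so the argument is complete.
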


A set of edges in a graph $G$ is \emph{independent} if no two edges in the set are adjacent in $G$. 
The edges in an independent set of edges of $G$ are called a \emph{matching} in $G$. 
If $M$ is a matching in a graph $G$ with the property that every vertex of $G$ is incident with an edge of $M$, then $M$ is a \emph{perfect matching} in $G$.

In the following, we determine the independence number of a connected bipartite graph that contains a matching.

\begin{lemma}
\label{result_4}
Let $G$ be a connected bipartite graph having partite sets $U$ and $W$, where $\left\vert U\right\vert=m$ and $ \left\vert W\right\vert=n$ for some positive integers $m$ and $n$ with $m \geq n$. 
If $G$ contains a matching of order $2n$, then $\beta\left(G\right)=m$.
\end{lemma}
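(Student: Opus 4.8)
The plan is to prove the two inequalities $\beta\left(G\right) \geq m$ and $\beta\left(G\right) \leq m$ separately; the first is immediate, and the second is where the hypothesized matching enters.

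First I would observe that, since $G$ is bipartite with partite sets $U$ and $W$, the set $U$ is itself independent, because no two vertices of $U$ are joined by an edge of $G$. Hence $\beta\left(G\right) \geq \left\vert U\right\vert = m$, which settles the lower bound without using the matching at all.

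For the reverse inequality I would first unpack the hypothesis: a matching of order $2n$ consists of exactly $n$ independent edges, and since $\left\vert W\right\vert = n$ and each such edge covers precisely one vertex of $W$, this matching saturates $W$. Write it as $M = \left\{ u_{1}w_{1}, u_{2}w_{2}, \dots, u_{n}w_{n}\right\}$ with each $w_{i} \in W$ and each $u_{i} \in U$. Now let $S$ be an arbitrary independent set of $G$ and build a map $\varphi : S \to U$ by sending each vertex of $S \cap U$ to itself and each vertex $w \in S \cap W$ to its $M$-partner in $U$ (which exists because $M$ saturates $W$). The crux of the argument is that $\varphi$ is injective: its restriction to $S \cap W$ is injective since $M$ is a matching, its restriction to $S \cap U$ is the identity, and the two image sets cannot overlap, for if some $w \in S \cap W$ were matched to a vertex $u \in S \cap U$, then both endpoints of the edge $uw \in M$ would lie in $S$, contradicting the independence of $S$. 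The existence of this injection forces $\left\vert S\right\vert \leq \left\vert U\right\vert = m$, and since $S$ was an arbitrary independent set we conclude $\beta\left(G\right) \leq m$.

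Combining the two bounds gives $\beta\left(G\right) = m$. The only genuinely delicate point is the collision-free verification in the injection; the rest is bookkeeping. I note in passing that one could instead invoke K\"{o}nig's theorem together with the identity $\beta\left(G\right) = \left\vert V\left(G\right)\right\vert - \nu\left(G\right)$, observing that the hypothesis forces the maximum matching number $\nu\left(G\right) = n$ and hence $\beta\left(G\right) = \left(m+n\right) - n = m$; however, the direct injection argument is self-contained and does not appeal to results outside the present excerpt, so that is the route I would follow.
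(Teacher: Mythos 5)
Your proof is correct, and it takes a cleaner route than the paper's own. Both you and the paper get $\beta\left(G\right) \geq m$ from the independence of $U$, and both use the matching to cap independent sets at $m$ vertices, but the executions differ. The paper first invokes its lemma on the uniqueness of the bipartition of a connected bipartite graph and then argues in two cases: for $m=n$ it compares $\beta\left(G\right)$ with $\beta\left(M\right)$ for the spanning perfect matching $M$, and for $m>n$ it counts, saying that an independent set takes at most one endpoint from each of the $n$ matching edges plus the $m-n$ unmatched vertices (which, incidentally, lie in $U$, not in $W$ as the paper's wording states). Your single injection $\varphi : S \to U$ subsumes both cases at once, makes the ``at most one endpoint per matching edge'' step fully precise --- the collision check between the images of $S \cap U$ and $S \cap W$ is exactly the right point to isolate --- and never uses connectedness or uniqueness of the bipartition, so your argument in fact proves a slightly more general statement than the lemma as stated. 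One caution about your closing aside: $\beta\left(G\right) = \left\vert V\left(G\right)\right\vert - \nu\left(G\right)$ is not an identity for general graphs; the general identity is $\beta\left(G\right) = \left\vert V\left(G\right)\right\vert - \tau\left(G\right)$ with $\tau$ the vertex-cover number, and it becomes $\nu$ only via K\"{o}nig's theorem for bipartite graphs, so that alternative route would need the appeal to bipartiteness made explicit.
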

\begin{proof}
Since $G$ is a connected bipartite graph, it follows from Lemma \ref{result_2} that the partition of $V\left(G\right)$ is unique.
With this fact, we consider two cases.

\noindent \textbf{Case 1.} For $m=n$, by hypothesis, $G$ contains a perfect matching $M$ of order $2m$.
It is clear that $\beta\left(M\right)=m$. 
Since $\left\vert V\left(M\right)\right\vert= \left\vert V\left(G\right)\right\vert=2m$ and $M \subseteq G$, 
it follows that $\beta\left(G\right) \leq \beta\left(M\right)=m$. 
The sets $U$ and $W$ are both independent sets of cardinality $m$, implying that
$\beta\left(G\right) \geq m$.
Therefore, we conclude that $\beta\left(G\right)=m$.

\noindent \textbf{Case 2.} For $m>n$, by hypothesis, $G$ contains a matching $M$ of order $2n$ in $G$.
Since $M$ has order $2n$, it follows that $n$ vertices of $U$ and $n$ vertices of $W$ are vertices of the matching.
Then the independent set may contain only $n$ vertices belonging to the subgraph of $G$ induced by the vertices of the matching $M$ together with the remaining vertices of $W$ not in $M$.
Thus, any independent set of $G$ may contain at most $n+\left(m-n\right)$ vertices.
On the other hand, $U$ is an independent set of cardinality $m$ in $G$.
Therefore, we conclude that $\beta\left(G\right)=m$.
\end{proof}

The next result provides a sufficient condition for a connected bipartite graph $G$ that guarantees that 
$\left\vert V\left(G\right)\right\vert +\delta\left(G\right) \neq 2\left\vert V\left(G\right)\right\vert -\beta\left(G\right)$.

\begin{theorem}
\label{theorem_Francesc}
Let $G$ be a connected bipartite graph having partite sets $U$ and $W$, where $\left\vert U\right\vert=m$ and $ \left\vert W\right\vert=n$ for some positive integers $m$ and $n$ with $m \geq n$. 
If $G$ is not the complete bipartite graph $K_{m.n}$ and contains a perfect matching of order $2n$, 
then 
\begin{equation*}
\left\vert V\left(G\right)\right\vert +\delta\left(G\right) \neq 2\left\vert V\left(G\right)\right\vert -\beta\left(G\right) \text{.}
\end{equation*}
\end{theorem}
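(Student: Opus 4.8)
The plan is to reduce the claimed inequality to a statement purely about the minimum degree of $G$, and then to use the non-completeness hypothesis to rule out the one value of $\delta\left(G\right)$ that would force equality. Writing $\left\vert V\left(G\right)\right\vert = m + n$, the assertion $\left\vert V\left(G\right)\right\vert + \delta\left(G\right) \neq 2\left\vert V\left(G\right)\right\vert - \beta\left(G\right)$ is equivalent, after cancellation, to $\delta\left(G\right) \neq \left(m+n\right) - \beta\left(G\right)$.

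First I would pin down $\beta\left(G\right)$. Since $G$ is a connected bipartite graph with partite sets $U$ and $W$ satisfying $\left\vert U\right\vert = m \geq n = \left\vert W\right\vert$, and since the hypothesized perfect matching of order $2n$ is in particular a matching of order $2n$, Lemma \ref{result_4} applies and yields $\beta\left(G\right) = m$. Substituting this into the reduced inequality, the target becomes simply $\delta\left(G\right) \neq \left(m+n\right) - m = n$.

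Next I would establish $\delta\left(G\right) \neq n$. Because $G$ is bipartite with parts $U$ and $W$, every vertex of $U$ is adjacent only to vertices of $W$, so each such vertex has degree at most $\left\vert W\right\vert = n$; as $U \neq \emptyset$, this already gives $\delta\left(G\right) \leq n$. Suppose toward a contradiction that $\delta\left(G\right) = n$. Then every vertex of $U$ has degree at least $n$ and at most $n$, hence exactly $n$, so each vertex of $U$ is adjacent to all of $W$. Since no edges lie inside $U$ or inside $W$, this means every edge between $U$ and $W$ is present, that is, $G = K_{m,n}$, contradicting the hypothesis that $G$ is not the complete bipartite graph $K_{m,n}$. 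Therefore $\delta\left(G\right) \leq n-1 < n$, and in particular $\delta\left(G\right) \neq n$, which is exactly the reduced inequality.

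The argument is short because Lemma \ref{result_4} does the heavy lifting of computing $\beta\left(G\right)$; the only real content is the observation that, once $\beta\left(G\right) = m$, equality of the two bounds is equivalent to every vertex of the larger part having full degree $n$. The main point to be careful about is therefore the implication $\delta\left(G\right) = n \Rightarrow G = K_{m,n}$, where I must use that bipartiteness confines all edges to run between $U$ and $W$, so that saturating the degree of every vertex of $U$ genuinely produces the complete bipartite graph. No case distinction between $m = n$ and $m > n$ is needed here, since Lemma \ref{result_4} already gives $\beta\left(G\right) = m$ uniformly.
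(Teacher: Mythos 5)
Your proof is correct and follows essentially the same route as the paper: both invoke Lemma \ref{result_4} to get $\beta\left(G\right)=m$ and then use the non-completeness of $G$ to conclude $\delta\left(G\right)<n$, which breaks the equality. The only cosmetic differences are that you carry out the arithmetic reduction directly rather than citing Lemma \ref{result_1} applied to $K_{m,n}$, and you spell out the contradiction argument showing $\delta\left(G\right)=n$ would force $G=K_{m,n}$, a step the paper asserts without detail.
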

\begin{proof}
Assume the hypothesis of the theorem.
It then follows from Lemma \ref{result_4} that 
\begin{equation*}
\beta\left(G\right)=m=\beta\left(K_{m,n}\right) \text{.}
\end{equation*}
Since $G$ is not the complete bipartite graph $K_{m,n}$, it follows that 
\begin{equation*}
\delta\left(G\right)<n=\delta\left(K_{m,n}\right) \text{,}
\end{equation*}
implying by Lemma \ref{result_1} that 
\begin{equation*}
\left\vert V\left(G\right)\right\vert+\delta\left(G\right)<\left\vert V\left(G\right)\right\vert+n=2\left\vert V\left(G\right)\right\vert-\beta\left(G\right) \text{.}
\end{equation*}
Therefore, the result follows.
\end{proof}

The preceding result is not necessarily true for $k$-partite graphs when $k \geq 3$. 
To see this, let $G$ be a complete $k$-partite graph $K_{n_{1},n_{2}, \dots, n_{k}}$ with partite sets $V_{1},V_{2}, \dots, V_{k}$ such that $\left\vert V_{i}\right\vert=n_{i}$ for each $i \in \left[1,k\right]$, where $n_{1}=\max\left\{n_{i} \left\vert \right. i\in \left[1,k\right]\right\}$ and $n_{1} \geq \sum_{i=2}^{k} n_{i}$.
Consider the subgraph $H$ of $G$ induced by the set of vertices $\bigcup_{i=1}^{k} V_{i}$, and let $H^{\prime}$ be any subgraph of $H$.
If we consider any subgraph $G^{\prime}$ of $G$ with 
\begin{equation*}
V\left(G^{\prime}\right)=V\left(G\right) \text{ and } E\left(G^{\prime}\right)=E\left(G\right)-E\left(H^{\prime}\right) \text{,}
\end{equation*}
then we have the following result.

\begin{theorem}
\label{theorem_Francesc2}
Let $G^{\prime}$ be any subgraph of the complete $k$-partite graph $G$ as defined above.
Then $\mathrm{str}\left( G^{\prime}\right)=\mathrm{str}\left( G\right)$.
\end{theorem}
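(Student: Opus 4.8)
The plan is to prove that $\mathrm{str}(G')$ and $\mathrm{str}(G)$ both equal $2N-n_1$, where $N=|V(G)|=\sum_{i=1}^{k}n_i$ and $n_1=\max_i n_i$. First I would evaluate $\mathrm{str}(G)$ itself. For the complete $k$-partite graph $G$ one has $\beta(G)=n_1$ and $\delta(G)=N-n_1$, so Lemma \ref{result_1} gives $N+\delta(G)=2N-\beta(G)$; sandwiching this equality between the bounds of Lemmas \ref{lower_bound} and \ref{upper_bound} forces $\mathrm{str}(G)=N+\delta(G)=2N-n_1$. For the upper bound on $G'$, observe that $G'$ has the same vertex set as $G$ but fewer edges, so every numbering $f$ satisfies $\mathrm{str}_f(G')\le\mathrm{str}_f(G)$, whence $\mathrm{str}(G')\le\mathrm{str}(G)=2N-n_1$. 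The entire difficulty therefore lies in the reverse inequality $\mathrm{str}(G')\ge 2N-n_1$.

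The structural fact I would exploit is that the deleted edges $E(H')$ all lie among the smaller parts $V_2,\dots,V_k$, so that every edge of $G$ joining $V_1$ to $V(G)\setminus V_1$ survives in $G'$. Fix an arbitrary numbering $f$ of $G'$, let $z$ be the vertex of $V_1$ carrying the largest label inside $V_1$, and let $y$ be the vertex of $V(G)\setminus V_1$ carrying the largest label outside $V_1$. Since $V_1$ has $n_1$ vertices bearing distinct labels, $f(z)\ge n_1$; since $V(G)\setminus V_1$ has $N-n_1$ vertices bearing distinct labels, $f(y)\ge N-n_1$; and the global maximum label $N$ occurs in one of these two sets, so $\max\{f(z),f(y)\}=N$. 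The hypothesis $n_1\ge\sum_{i=2}^{k}n_i=N-n_1$ then forces $\min\{f(z),f(y)\}\ge N-n_1$, whence $f(z)+f(y)\ge N+(N-n_1)=2N-n_1$. Because $z\in V_1$ and $y\notin V_1$, the edge $zy$ lies in $G'$, so $\mathrm{str}_f(G')\ge 2N-n_1$; as $f$ was arbitrary, $\mathrm{str}(G')\ge 2N-n_1$, and combined with the upper bound this yields $\mathrm{str}(G')=2N-n_1=\mathrm{str}(G)$.

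The step I expect to be the main obstacle is exactly this lower bound, because the ready-made bound from Lemma \ref{upper_bound} is too weak here. Indeed, deleting edges only among the small parts leaves $\beta(G')=n_1$: any independent set of $G'$ lies wholly in $V_1$ or wholly in $V_2\cup\cdots\cup V_k$, and the latter has at most $N-n_1\le n_1$ vertices. Hence Lemma \ref{upper_bound} delivers merely $\mathrm{str}(G')\ge 2N-2n_1+1$, which falls short of $2N-n_1$ by $n_1-1$. The pigeonhole argument above circumvents this loss by extracting a single surviving edge $zy$ whose two endpoints are each individually forced to carry a large label, the size hypothesis $n_1\ge N-n_1$ serving precisely to guarantee that even the smaller of the two labels is still at least $N-n_1$.
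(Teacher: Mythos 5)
Your proof is correct, but it takes a genuinely different route from the paper's after the first step. Both you and the paper compute $\mathrm{str}(G)=2N-n_{1}$ (writing $N=\sum_{i=1}^{k}n_{i}$ as you do) by the same squeeze: Lemma \ref{result_1} makes the bounds of Lemmas \ref{lower_bound} and \ref{upper_bound} coincide. For $G'$, however, the paper simply re-applies those same two lemmas: since no edge incident with $V_{1}$ is deleted, every vertex of $V_{1}$ keeps degree $\sum_{i=2}^{k}n_{i}=N-n_{1}$, every vertex outside $V_{1}$ remains adjacent to all of $V_{1}$ and so has degree at least $n_{1}\geq N-n_{1}$, whence $\delta(G')=N-n_{1}$; together with $\beta(G')=n_{1}$ and $|V(G')|=N$, the squeeze $N+\delta(G')\leq\mathrm{str}(G')\leq 2N-\beta(G')$ closes again. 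You instead prove the upper bound by monotonicity ($\mathrm{str}(G')\leq\mathrm{str}(G)$ for a spanning subgraph) and the lower bound by a bespoke pigeonhole argument on the largest labels inside and outside $V_{1}$. That argument is sound: the edge $zy$ does survive in $G'$, and $f(z)+f(y)=\max\{f(z),f(y)\}+\min\{f(z),f(y)\}\geq N+(N-n_{1})$. But note that your ``main obstacle'' dissolves once one notices that $\delta(G')$ is still $N-n_{1}$, so the minimum-degree bound of Lemma \ref{lower_bound} delivers $\mathrm{str}(G')\geq 2N-n_{1}$ directly; your pigeonhole argument is in effect a hand-made reproof of that lemma in this special case. (Your observation that the independence-number lower bound $2N-2\beta(G')+1$ from Lemma \ref{upper_bound} is too weak is accurate, but that is not the bound the paper leans on.) What your route buys is self-containedness --- the reader sees concretely which surviving edge forces the large edge sum; what the paper's route buys is brevity and fidelity to its theme of computing the strength by making known lower and upper bounds coincide.
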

\begin{proof}
First, observe that  
$\deg_{G} v=\sum_{i=1}^{k} n_{i}-n_{j}$ for all $v \in V_{j}$, where $j \in \left[1,k\right]$.
Further, observe that $\left\vert V\left(G\right)\right\vert=\sum_{i=1}^{k} n_{i}$, $\beta\left(G\right)=n_{1}$ and $\delta\left(G\right)=\sum_{i=2}^{k} n_{i}$.
This implies by Lemmas \ref{lower_bound} and \ref{upper_bound} that 
\begin{equation*}
\left\vert V\left(G\right)\right\vert+\delta\left(G\right) \leq \mathrm{str}\left( G\right) \leq 2\left\vert V\left(G\right)\right\vert-\beta\left(G\right)
\end{equation*}
or, equivalently, 
\begin{equation*}
2\sum_{i=1}^{k} n_{i}-n_{1}=\sum_{i=1}^{k} n_{i}+\sum_{i=2}^{k} n_{i} \leq \mathrm{str}\left( G\right) \leq 2\sum_{i=1}^{k} n_{i}-n_{1} \text{.}
\end{equation*}
Consequently, $\mathrm{str}\left( G\right) = 2\sum_{i=1}^{k} n_{i}-n_{1}$.

Now, consider the graph $G^{\prime}$ as defined above.
Since no edges $xy$, where $x \in V_{1}$ have been removed and any $y \in V\left(H\right)-V_{1}$ is adjacent in $G^{\prime}$ to all vertices of $V_{1}$, it follows that all these vertices have degree at least $n_{1}$ and $n_{1} \geq \sum_{i=2}^{k} n_{i}$, implying that
$\delta\left(G^{\prime}\right)=\sum_{i=2}^{k} n_{i}$.
It is clear that $\beta\left(G^{\prime}\right)=n_{1}$ since $n_{1} \geq \sum_{i=2}^{k} n_{i}$.
It is also true that $\left\vert V\left(G^{\prime}\right)\right\vert=\left\vert V\left( G\right)\right\vert$.
Therefore, we conclude that $\mathrm{str}\left( G^{\prime}\right) = 2\sum_{i=1}^{k} n_{i}-n_{1}$.
\end{proof}

\section{Conclusions}
The goal of this paper is to study graphs for which the lower bounds and the upper bounds for the strength coincide, and hence the strength of such graphs can be computed.
We have seen this in Theorem \ref{theorem_Francesc2}.
We also find graphs for which we show that this approach does not work (see Theorem \ref{theorem_Francesc}).
Notice that Theorem \ref{theorem_Francesc} suggests the use of a well-known theorem due to Hall \cite {Hall} and referred to as Hall's Marriage Theorem to determine the existence of certain matchings in bipartite graphs.
Finally, we conclude this paper with the following two open problems that we think that may be interesting for future research.

Let $KG_{n,k}$ denote the Kneser graph introduced by Lov\'{a}sz \cite{Lovasz} to prove Kneser's Conjecture (see \cite{Kneser}).
For positive integers $k$ and $n$ with $n=k+1$, $KG_{n,k}=nK_{1}$, it follows from the definition that $\mathrm{str}\left(KG_{n,k}\right)=+\infty$.
For positive integers $k$ and $n$ with $n \geq 2k$, we propose the following problem.

\begin{problem}
For every two positive integers $k$ and $n$ with $n \geq 2k$, determine the exact value of $\mathrm{str}\left( KG_{n,k}\right)$.
\end{problem}

Let $\overline{KG_{n,k}}$ denote the complement of $KG_{n,k}$. 
For $k=1$ and any positive integer $n$, $\overline{KG_{n,k}}=\overline{K_{n}}=nK_{1}$, and follows from the definition that $\mathrm{str}\left(\overline{KG_{n,k}}\right)=+\infty$.
For integers $k$ and $n$ with $n \geq 2k \geq 4$, we propose the following problem.

\begin{problem}
For every two integers $k$ and $n$ with $n \geq 2k \geq 4$, determine the exact value of $\mathrm{str}\left( \overline{KG_{n,k}}\right)$.
\end{problem}

We find the above two problems interesting since Kneser graphs and their complements are very well studied graphs, 
and hence any question related to them seems to be interesting.

\end{document}